\newtheorem{theorem}{Theorem}
\newtheorem*{theorem*}{Theorem}
\newtheorem{proposition}{Proposition}
\newtheorem{lemma}{Lemma}
\newtheorem{corollary}{Corollary}
\newtheorem{question}{Question}
\theoremstyle{remark}
\newtheorem{remark}{Remark}
\newcommand{\C}{\mathbb{C}}
\newcommand{\zb}{\overline{z}}
\newcommand{\D}{\Omega}
\newcommand{\Dc}{\overline{\Omega}}
\newcommand{\dbar}{\overline{\partial}}
\title[Compactness of the $\dbar$-Neumann operator and the commutators]
{Compactness of the $\dbar$-Neumann operator and commutators of the Bergman
projection with continuous functions}
\author{Mehmet \c{C}el\.ik}
\address[Mehmet \c{C}elik]{University of North Texas at Dallas, Department of
Mathematics \& Information Sciences,  7400 University Hills Blvd., 
Dallas, TX 75241}
\email{mehmet.celik@unt.edu}
\author{S\"{o}nmez \c{S}ahuto\u{g}lu}
\address[S\"{o}nmez \c{S}ahuto\u{g}lu]{ University of Toledo, Department of
Mathematics \& Statistics, 2801 W. Bancroft, Toledo, OH 43606, USA}
\email{sonmez.sahutoglu@utoledo.edu}
\subjclass[2010]{Primary 32W05; Secondary 46B35}
\keywords{$\dbar$-Neumann operator, Hankel operators, Bergman projection,
pseudoconvex domain}
\date{\today}
\begin{document}

\begin{abstract}
Let $\D$ be a bounded pseudoconvex domain in $\C^n, n\geq 2, 0\leq p\leq n,$ and
$1\leq q\leq n-1.$ We show that compactness of the $\dbar$-Neumann operator, $N_{p,q+1},$
on square integrable $(p,q+1)$-forms is equivalent to compactness of the commutators
$[P_{p,q}, \zb_j]$ on square integrable $\dbar$-closed $(p,q)$-forms for $1\leq j\leq n$
where $P_{p,q}$ is the Bergman projection on $(p,q)$-forms. We also show that  compactness
of the commutator of the Bergman projection with bounded functions percolates up
in the $\dbar$-complex on $\dbar$-closed forms and square integrable holomorphic
forms.
\end{abstract}

\maketitle

The purpose of this paper is to characterize compactness of the $\dbar$-Neumann
operator on square integrable  $(p,q)$-forms. Theorem \ref{Thm1} provides six
equivalent statements, for $q$ at least $2$, on  bounded pseudoconvex domains. 
However, the important special case of functions, namely $(0,0)$-forms, remains
open. In Remark  \ref{Remq=0}  we discuss why our proof breaks down in this
case.

Compactness results in the $\dbar$-Neumann problem have a long history;
we refer to a recent book by Straube \cite{StraubeBook} for a detailed 
discussion. We note here that most known results provide conditions for
compactness in terms of the boundary geometry. It is also useful to characterize
compactness in functional analytic terms. For example, Catlin and D'Angelo
\cite[Theorem 1]{CatlinD'Angelo97} used compactness of the commutators
$[P,\phi]$ of the Bergman projection $P$ and certain multiplication operators
$\phi$ in conjunction with a complex variables analogue of Hilbert's 17th
problem (see also \cite{D'Angelo11}). In the same paper they showed that
compactness of $N_{0,1}$ implies that the commutators $[P,M]$ are compact for
all tangential pseudodifferential operators $M$ of order 0. D'Angelo then asked
the following question: 

\begin{question}\label{Question}
Can one characterize compactness of the $\dbar$-Neumann operator in 
terms of commutators $[P,\phi]$?
\end{question}

This question is appealing because of its connection to operator theory as well. 
Let $A^2(\Omega)$ be the space of square integrable holomorphic functions on 
$\Omega$. The Hankel operator $H_{\phi}: A^2(\Omega) \to L^2(\Omega)$, with 
a bounded symbol $\phi$, is defined by $H_{\phi}(f) = (I-P)(\phi f)$. Using Kohn's 
formula, $P=I-\dbar^*N_{0,1}\dbar,$ one  obtains that 
$H_{\phi}f=\dbar^*N_{0,1}\dbar(f\phi).$ Using this formula,  \v{C}u\v{c}kovi\'c
and \c{S}ahuto\u{g}lu \cite{CuckovicSahutoglu09} studied how boundary geometry
interacts with Hankel operators. They showed that, on smooth bounded convex
domains in $\C^2,$ compactness of  $H_{\phi}$ can be characterized by the
behavior of $\phi$ on analytic discs in the boundary. Here $\phi$ is smooth up
to the boundary. It would be interesting to know if this characterization
still holds in higher dimensions. 

On convex domains the relation between the compactness of the commutators and
of the $\dbar$-Neumann operator has been fairly well understood: if $\D$ is a 
bounded convex domain, then compactness of  $N_{p,q+1}$ is equivalent to
compactness of  the commutators $[P_{p,q},\phi]$ on the space of $(p,q)$-forms
with square integrable holomorphic coefficients, for all functions $\phi$
continuous on $\Dc$ (see \cite[Remark (ii) in Section 4.1]{StraubeBook}).

Theorem \ref{Thm1} fails on non-pseudoconvex domains. In our previous paper 
\cite{CelikSahutoglu2012} we  constructed a smooth bounded non-pseudoconvex 
domain in $\C^n$ (for a given $n\geq 3$) for which the commutators $[P,\phi]$ are 
compact (on square integrable functions)  for all $\phi$ continuous on the closure 
of the domain, yet the $\dbar$-Neumann operator $N_{0,1}$ is not compact. In this 
paper we will consider the important pseudoconvex case and establish a decisive 
result on forms in Theorem \ref{Thm1}.

We would like to thank  \v{Z}eljko \v{C}u\v{c}kovi\'c, John D'Angelo, and Emil
Straube for valuable comments on a preliminary version of this manuscript. We also 
thank the referee for comments that improved the exposition of the paper and for 
suggesting Lemma \ref{LemAlgebra}.  

\section{Background and Main Results}

Let $\D$ be a bounded pseudoconvex domain in $\C^n.$ We denote the 
square-integrable $(p,q)$-forms on $\D$ by $L^2_{(p,q)}(\D)$ and the subspace of 
$\dbar$-closed forms by $K^2_{(p,q)}(\D)$. The $\dbar$-Neumann
operator, $N_{p,q},$  is defined as the solution operator for $\Box_{p,q} u=v$
where $\Box_{p,q}=\dbar\dbar^*+\dbar^*\dbar$ on $L^2_{(p,q)}(\D)$ and $\dbar^*$
is the Hilbert space adjoint of $\dbar$. H\"{o}rmander \cite{Hormander65} showed
that $N_{p,q}$ is a bounded operator when $\D$ is a bounded pseudoconvex
domain. Kohn in \cite{Kohn63} connected the Bergman projection $P_{p,q}$ on
$(p,q)$-forms with the $\dbar$-Neumann operator by the formula
\[P_{p,q}=I-\dbar^* N_{p,q+1}\dbar.\]
We note that $P_{p,q}$ is the orthogonal projection onto $K^2_{(p,q)}(\D),$ the
operator  $P_{0,0}$ is the classical Bergman projection $P$, and
$K^2_{(0,0)}(\D)$ (also denoted as $A^2(\D)$) is called the Bergman space. We
refer the reader to \cite{ChenShawBook,StraubeBook} for more information about
the $\dbar$-Neumann problem and related issues.

The commutators of the Bergman projection with multiplication operators can
also be written in terms of the $\dbar$-Neumann operator. Let $f\in K^2_{(p,q)}(\D)$ 
and $\phi\in C^{1}(\Dc)$. Then the equality 
\[[P_{p,q},\phi]f= -\dbar^*N_{p,q+1}\dbar\phi\wedge f\]
follows easily from Kohn's formula. Furthermore, compactness
of  $N_{p,q+1}$ on $L^2_{(p,q+1)}(\D)$ implies that $[P_{p,q},\phi]$ is
compact on $L^2_{(p,q)}(\D)$ for all  $\phi\in C(\Dc)$  (see
\cite[Propositions 4.1 and 4.2]{StraubeBook}).  In the following theorem we
show that the the converse is true when $q\geq 1.$

\begin{theorem} \label{Thm1}
Let $\D$ be a bounded pseudoconvex domain in $\C^n, n\geq 2, 0\leq p\leq n,$ and
$1\leq q\leq n-1.$  Then the following are equivalent:
\begin{itemize}
\item[i.] $N_{p,q+1}$ is compact on $L^2_{(p,q+1)}(\D),$
\item[ii.] $\dbar^*N_{p,q+1}$ is compact on $K^2_{(p,q+1)}(\D),$ 
\item[iii.] $[P_{p,q},\zb_j]$ is compact on $K^2_{(p,q)}(\D)$ for all 
$1\leq j\leq n,$
\item[iv.] $[P_{p,q},\zb_j]$ is compact on $L^2_{(p,q)}(\D)$ for all 
$1\leq j\leq n,$
\item[v.] $[P_{p,q},\phi ]$ is compact on $L^2_{(p,q)}(\D)$ for all 
$\phi\in C(\Dc),$
\item[vi.] $[P_{p,q},\phi ]$ is compact on $K^2_{(p,q)}(\D)$ for all 
$\phi\in C(\Dc).$
\end{itemize}
\end{theorem}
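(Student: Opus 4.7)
My plan is to establish the equivalences by running the cycle $(i) \Rightarrow (ii) \Rightarrow (iii) \Rightarrow (iv) \Rightarrow (v) \Rightarrow (vi) \Rightarrow (i)$, with most implications short and a single substantial step at the end. The top of the cycle is routine: $(i) \Rightarrow (ii)$ since $\dbar^* N_{p,q+1}$ is a bounded-after-compact composition on $L^2_{(p,q+1)}$, restricted to the closed subspace $K^2_{(p,q+1)}$; $(ii) \Rightarrow (iii)$ since the commutator formula displayed in the excerpt factors $[P_{p,q}, \zb_j]|_{K^2_{(p,q)}}$ as the bounded wedge map $f \mapsto d\zb_j \wedge f$ followed by the compact $\dbar^* N_{p,q+1}|_{K^2_{(p,q+1)}}$; and $(v) \Rightarrow (iv)$, $(v) \Rightarrow (vi)$, $(vi) \Rightarrow (iii)$ are trivial specializations.

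The step $(iii) \Rightarrow (iv)$ is a one-line invariance observation: since $z_j$ is holomorphic, $K^2_{(p,q)}$ is $M_{z_j}$-invariant, and by duality $(K^2_{(p,q)})^\perp$ is $M_{\zb_j}$-invariant, which forces $[P_{p,q}, \zb_j]$ to vanish on $(K^2_{(p,q)})^\perp$. Hence $[P_{p,q}, \zb_j] = [P_{p,q}, \zb_j] \circ P_{p,q}$ on $L^2_{(p,q)}$, and compactness on $K^2_{(p,q)}$ upgrades immediately to compactness on $L^2_{(p,q)}$. For $(iv) \Rightarrow (v)$ I would show that
\begin{equation*}
\mathcal{A} = \{\phi \in L^\infty(\D) : [P_{p,q}, \phi] \text{ is compact on } L^2_{(p,q)}\}
\end{equation*}
is a norm-closed $*$-subalgebra of $L^\infty(\D)$: the Leibniz identity $[P_{p,q}, \phi\psi] = [P_{p,q}, \phi] M_\psi + M_\phi [P_{p,q}, \psi]$ gives the algebra property, $[P_{p,q}, \bar\phi] = -[P_{p,q}, \phi]^*$ (from self-adjointness of $P_{p,q}$) gives conjugation-closure, and $\|[P_{p,q}, \phi]\| \le 2\|\phi\|_\infty$ gives norm-closedness. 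Condition $(iv)$ places each $\zb_j$ in $\mathcal{A}$ and hence each $z_j$; Stone--Weierstrass then gives $\mathcal{A} \supseteq C(\Dc)$, which is $(v)$. I expect this algebra fact is the content of Lemma \ref{LemAlgebra}.

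The heart of the proof is $(vi) \Rightarrow (i)$. The plan is to invoke the standard identification (cf.\ \cite{StraubeBook}) of compactness of $N_{p,q+1}$ with compactness of the canonical solution operator $\dbar^* N_{p,q+1}$ on $K^2_{(p,q+1)}$, reducing $(i)$ to $(ii)$, and then prove $(iii) \Rightarrow (ii)$ by the following decomposition lemma: \emph{for $q \ge 1$, every $u \in K^2_{(p,q+1)}(\D)$ admits a representation $u = \sum_{j=1}^n d\zb_j \wedge f_j$ with $f_j \in K^2_{(p,q)}(\D)$ and $\sum_j \|f_j\|^2 \le C\|u\|^2$.} Granted the lemma, the commutator formula gives
\begin{equation*}
\dbar^* N_{p,q+1} u \;=\; -\sum_{j=1}^n [P_{p,q}, \zb_j] f_j,
\end{equation*}
which is compact in $u$ by $(iii)$. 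The lemma itself would be proved by combining the pointwise surjectivity of the Koszul wedge map $(\Lambda^{p,q})^{\oplus n} \to \Lambda^{p,q+1}$ (which holds precisely when $q \ge 1$, by the dimension count $n \binom{n}{q} \ge \binom{n}{q+1}$) with Hörmander's $L^2$-solvability of $\dbar$ on $\D$ to correct an initial pointwise decomposition into one with $\dbar$-closed components while preserving the $L^2$ estimate.

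The principal obstacle is this decomposition lemma. The naive pointwise recipe $f_j = (q+1)^{-1}\iota_{\partial/\partial\zb_j} u$ lands only in $L^2_{(p,q)}$, not $K^2_{(p,q)}$, and replacing these $f_j$ by their Bergman projections $P_{p,q} f_j$ destroys the identity $u = \sum_j d\zb_j \wedge f_j$. Any successful argument must use $q \ge 1$ essentially---exactly the obstruction Remark \ref{Remq=0} flags for the excluded case $q = 0$.
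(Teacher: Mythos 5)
Your upper cycle and algebra step are sound and match the paper: (i) $\Rightarrow$ (ii) $\Rightarrow$ (iii) is the paper's route, your Stone--Weierstrass argument for (iv) $\Rightarrow$ (v) is exactly the content of Lemma \ref{LemAlgebra} and Corollary \ref{CorAlgebra}, and your invariance argument for (iii) $\Rightarrow$ (iv) (that $[P_{p,q},\zb_j]$ annihilates $\left(K^2_{(p,q)}(\D)\right)^{\perp}$ because $K^2_{(p,q)}(\D)$ is invariant under multiplication by the holomorphic function $z_j$) is correct and is a nice alternative to the paper's route through \cite[Proposition 4.1]{StraubeBook}. But there are two genuine gaps. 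The first is your reduction of (i) to (ii): the ``standard identification'' you invoke does not exist for $1\leq q\leq n-2$. What is standard (Range's formula, recorded as Proposition \ref{Prop1} in the paper) is that $N_{p,q+1}$ is compact if and only if \emph{both} $\dbar^*N_{p,q+1}$ and $\dbar^*N_{p,q+2}$ are compact; the second operator drops out only when $q+1=n$. Indeed, the equivalence (i) $\Leftrightarrow$ (ii) is advertised in the paper as a new \emph{consequence} of Theorem \ref{Thm1}, so assuming it is circular. The missing step is precisely the paper's Lemma \ref{LemCompPerculate}: compactness of $\dbar^*N_{p,q+1}$ on $K^2_{(p,q+1)}(\D)$ percolates up to $\dbar^*N_{p,q+2}$ on $K^2_{(p,q+2)}(\D)$, after which Proposition \ref{Prop1} closes the loop. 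The fix is within reach of your own tools---apply your decomposition lemma one level higher, to forms in $K^2_{(p,q+2)}(\D)$, which is legitimate since $q+1\geq 1$---but as written the hardest implication of your proof is unproven.

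The second gap is the decomposition lemma itself, which is the heart of the matter and whose sketch does not hold up. The parenthetical claim that pointwise surjectivity of the wedge map ``holds precisely when $q\geq 1$'' is false: the map is pointwise surjective for all $q\geq 0$, since $u=\frac{1}{q+1}\sum_{j}d\zb_j\wedge(d\zb_j\vee u)$ for any $(p,q+1)$-form $u$ (and your own dimension count $n\binom{n}{q}\geq\binom{n}{q+1}$ holds at $q=0$). What fails at $q=0$ is not pointwise algebra but $\dbar$-closedness of the components, i.e.\ holomorphy, as in the example $\phi(z_1)d\zb_1$ of Remark \ref{Remq=0}. More seriously, your correction scheme faces a real obstruction, not just missing details: with $h_j$ an interior product of $u$, the forms $\dbar h_j$ need not lie in $L^2$ at all, because only $u$, not the individual derivatives of its coefficients, is controlled (take $n=2$, $q=1$, $u=v\,d\zb_1\wedge d\zb_2$ with arbitrary $v\in L^2$: then $h_1\propto v\,d\zb_2$ and $\dbar h_1\propto(\partial v/\partial\zb_1)\,d\zb_1\wedge d\zb_2$). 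So the system $\dbar e_j=\dbar h_j$, $\sum_j d\zb_j\wedge e_j=0$ cannot even be posed in $L^2$, and H\"ormander's theorem solves a single $\dbar$-equation, not this coupled Koszul--Dolbeault system. The paper's proof of Lemma \ref{LemDbarClosed} avoids correction entirely: it decomposes not $u$ but the canonical solution $f=\dbar^*N_{p,q+1}u$, a $(p,q)$-form, as $f=\sum_j f_j\wedge d\zb_j$ with $(p,q-1)$-form components (this is exactly where $q\geq 1$ enters: one needs forms two levels below $u$); since $\dbar f=u$ and $\dbar^*f=0$, the Kohn--Morrey--H\"ormander estimate \cite[Corollary 2.13]{StraubeBook} gives $\sum_{j,k}\left\|\partial f_j/\partial\zb_k\right\|\lesssim\|\dbar f\|+\|\dbar^*f\|=\|u\|$, and then $g_j=\dbar f_j$ are automatically $\dbar$-closed with $\sum_j g_j\wedge d\zb_j=\dbar f=u$. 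Until you either adopt this argument or make your correction scheme rigorous with estimates, the central lemma---and with it the implication (iii) $\Rightarrow$ (ii)---remains unestablished.
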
 

The most important implications in the Theorem \ref{Thm1} are  iii. implies i. and iv. 
implies v.. The rest of the implications are either known or easy: 
i. implies ii. is known \cite[Proposition 4.2]{StraubeBook}; ii. implies iii. is easy; 
ii. implies iv. follows from \cite[Proposition 4.1]{StraubeBook}; v. implies vi. and 
vi. implies iii. are obvious. 

We would like to mention that Haslinger in \cite[Theorem 3]{Haslinger08}
proved equivalence of iii., iv., v., and vi. in Theorem \ref{Thm1} only in case of
$p=q=0.$ 

One consequence of Theorem \ref{Thm1} is the following observation: To
conclude that $N_{p,q}$ is compact it is enough to verify compactness of
$\dbar^*N_{p,q}$ only, instead of verifying compactness of both $\dbar^*N_{p,q}$
and $\dbar^*N_{p,q+1}$  (see Proposition \ref{Prop1} in the next section).

\begin{remark}\label{CompactnesN1vsN0}
We note that compactness of $N_{p,0}$ on the orthogonal complement of
$A^{2}(\D)$ is equivalent to compactness of $N_{p,1}$ on $L^2_{(p,1)}(\D).$ This
can be seen as follows:  
$N_{p,0}=\left(\dbar^* N_{p,1}\right)\left(\dbar^*N_{p,1}\right)^*.$ This formula shows
that compactness of $N_{p,0}$ implies compactness of $\dbar^* N_{p,1}.$ Then 
Lemma \ref{LemCompPerculate} implies that $\dbar^* N_{p,2}$ compact. Finally, Range's
Formula  (see \cite[p. 77]{StraubeBook} and \cite{Range84}) for $q=1$
\[N_{p,1}=\left(\dbar^* N_{p,1}\right)^*\left(\dbar^* N_{p,1}\right)+
\left(\dbar^* N_{p,2}\right)\left(\dbar^* N_{p,2}\right)^*\]
implies that $N_{p,1}$ is compact. On the other hand, compactness  $N_{p,1}$ implies 
compactness of $\dbar^* N_{p,1}.$ In turn, the formula  $N_{p,0}=\left(\dbar^*
N_{p,1}\right)\left(\dbar^*N_{p,1}\right)^*$ shows that, in this case, $N_{p,0}$ is
compact on the orthogonal complement of $A^{2}(\D)$.
\end{remark}
 
Compactness of the $\dbar$-Neumann operator percolates up in the $\dbar$-complex
(see Remark \ref{RemPercolate}).  The following theorem shows that the same is
true for the commutator of the Bergman projection with a function continuous on the
closure of the domain. 

\begin{theorem}\label{Thm2}
Let $\D$ be a bounded pseudoconvex domain in $\C^n$ for $n\geq 2, 0\leq p\leq
n,1\leq q\leq n-1,$ and $\phi\in L^{\infty}(\D).$ Assume that $[P_{p,q},\phi]$
is compact on $K^2_{(p,q)}(\D).$ Then $[P_{p,q+1},\phi]$ is compact on
$K^2_{(p,q+1)}(\D)$.
\end{theorem}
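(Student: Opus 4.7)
The plan is a weak-convergence argument, in the spirit of the standard percolation result for the $\dbar$-Neumann operator. Let $\{g_n\}$ be a bounded sequence in $K^2_{(p,q+1)}(\D)$ with $g_n \rightharpoonup 0$; since $P_{p,q+1}g_n = g_n$, the commutator equals $-(I - P_{p,q+1})(\phi g_n)$, so it suffices to show this sequence converges to zero in norm. First, introduce the canonical primitive $u_n = \dbar^* N_{p,q+1}g_n$: it is bounded in $L^2_{(p,q)}(\D)$, lies in $(K^2_{(p,q)}(\D))^\perp$, satisfies $\dbar u_n = g_n$, and converges weakly to zero by boundedness of $\dbar^* N_{p,q+1}$.

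The computational heart is an identity that brings $(I - P_{p,q+1})(\phi g_n)$ down to the $(p,q)$-level. For $\phi \in C^1(\Dc)$, decomposing $\phi g_n = \dbar(\phi u_n) - \dbar\phi \wedge u_n$ and using that $\dbar(\phi u_n)$ is $\dbar$-exact---hence in $K^2_{(p,q+1)}(\D)$ and annihilated by $I - P_{p,q+1}$---yields
\[
(I - P_{p,q+1})(\phi g_n) \;=\; -(I - P_{p,q+1})(\dbar\phi \wedge u_n) \;=\; \dbar^* N_{p,q+2}(\dbar\phi \wedge g_n).
\]
By the same token, $[P_{p,q},\phi]f = -\dbar^* N_{p,q+1}(\dbar\phi \wedge f)$ for $f \in K^2_{(p,q)}(\D)$, and the hypothesis asserts that this operator is compact in $f$.

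The link between the two levels should come from the adjoint reformulation: compactness of $[P_{p,q},\phi]$ on $K^2_{(p,q)}(\D)$ is equivalent to compactness of the operator $v \mapsto P_{p,q}(\overline{\phi}\,v)$ from $(K^2_{(p,q)}(\D))^\perp$ into $K^2_{(p,q)}(\D)$. Applied to the weakly null sequence $u_n \in (K^2_{(p,q)}(\D))^\perp$, this produces strong convergence of $P_{p,q}(\overline{\phi}u_n)$ to zero. Feeding this back through the identity above---together with the $\dbar$-complex relations $\dbar N = N\dbar$ and $\dbar^* N = N \dbar^*$, which allow one to trade the level $q+2$ operator $\dbar^* N_{p,q+2}$ for an expression involving $\dbar^* N_{p,q+1}$ acting on $(p,q)$-level data---should deliver $(I - P_{p,q+1})(\phi g_n) \to 0$.

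The main obstacle is justifying the identity rigorously for $\phi$ only in $L^\infty(\D)$, where $\dbar\phi$ exists only as a distribution and it is not a priori clear that $\phi u_n \in \mathrm{Dom}(\dbar)$. My expected remedy is to replace $\dbar\phi \wedge u_n$ by the $L^2$-meaningful surrogate $\dbar((I - P_{p,q})(\phi u_n)) - \phi g_n$ (using $\dbar P_{p,q} = 0$ to make the exact term appear), and to lean on Lemma \ref{LemAlgebra}, whose operator-algebraic content presumably furnishes the manipulations needed to bypass the pointwise differentiation of $\phi$.
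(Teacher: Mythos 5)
Your setup is sound as far as it goes: the reduction to weakly null sequences, the identity $[P_{p,q+1},\phi]g_n=-(I-P_{p,q+1})(\phi g_n)$, the properties of the canonical primitive $u_n=\dbar^*N_{p,q+1}g_n$, and the adjoint reformulation (compactness of $[P_{p,q},\phi]$ on $K^2_{(p,q)}(\D)$ is equivalent to compactness of $v\mapsto P_{p,q}(\overline{\phi}\,v)$ on $(K^2_{(p,q)}(\D))^{\perp}$) are all correct. The gap is at the decisive step: the only output your scheme extracts from the hypothesis, namely the strong convergence $P_{p,q}(\overline{\phi}\,u_n)\to 0$, is never connected to the quantity you must estimate, $\|(I-P_{p,q+1})(\phi g_n)\|$; the phrase ``should deliver'' stands in for the whole argument, and the route you indicate does not close. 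Any identity linking $u_n$ (level $q$) to $g_n=\dbar u_n$ (level $q+1$) must move a $\dbar$ past the multiplier and therefore produces $\dbar\phi$. Even granting $\phi\in C^1(\Dc)$, your own computation gives $(I-P_{p,q+1})(\phi g_n)=-(I-P_{p,q+1})(\dbar\phi\wedge u_n)$, hence $\|(I-P_{p,q+1})(\phi g_n)\|\leq\|\dbar\phi\wedge u_n\|$; weak convergence of $u_n$ gives no decay of this bound, and the symbols now appearing are the derivatives $\partial\phi/\partial\zb_j$, about which the hypothesis (compactness of the commutator with $\phi$ itself, on $\dbar$-closed forms) says nothing. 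The relations $\dbar N=N\dbar$ and $\dbar^*N=N\dbar^*$ do not repair this, since they are applied to expressions still containing $\dbar\phi$. There is also a conjugation mismatch you cannot undo: the adjoint form of the hypothesis concerns $\overline{\phi}$ on $(K^2_{(p,q)}(\D))^{\perp}$, while the conclusion concerns $\phi$ on $K^2_{(p,q+1)}(\D)$, and compactness of $[P_{p,q},\phi]$ on $K^2_{(p,q)}(\D)$ is genuinely not symmetric under $\phi\mapsto\overline{\phi}$: on the polydisk with $q\leq n-2$ one has $[P_{p,q},z_j]=0$ on $K^2_{(p,q)}(\D)$ for every $j$, yet the operators $[P_{p,q},\zb_j]$ cannot all be compact there, by Lemma \ref{LemComCanonical} and \cite{FuStraube98}.

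Your proposed $L^{\infty}$ remedy is circular: since $P_{p,q}(\phi u_n)$ is automatically $\dbar$-closed, $(I-P_{p,q})(\phi u_n)\in\mathrm{Dom}(\dbar)$ if and only if $\phi u_n\in\mathrm{Dom}(\dbar)$, so the surrogate $\dbar\bigl((I-P_{p,q})(\phi u_n)\bigr)$ is exactly as undefined as $\dbar(\phi u_n)$; and Lemma \ref{LemAlgebra} is a purely algebraic identity for bounded operators, so it cannot produce membership in $\mathrm{Dom}(\dbar)$. A telling symptom: nothing in your outline uses $q\geq 1$, so if it closed it would equally prove the $q=0$ case, which is precisely the case the paper cannot handle (Remark \ref{Remq=0}) and which is tied to the open Question \ref{Question}. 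The paper's proof avoids differentiating $\phi$ altogether by using a different bridge between levels: Lemma \ref{LemDbarClosed} (this is where $q\geq 1$ and pseudoconvexity enter) writes $g=\sum_{j=1}^{n}g_j\wedge d\zb_j$ with $g_j\in K^2_{(p,q)}(\D)$ and $\sum_{j}\|g_j\|\lesssim\|g\|$, so the passage from level $q$ to level $q+1$ is wedging with $d\zb_j$ rather than applying $\dbar$ to a primitive; then the identity \eqref{EqnDbarClosed},
\[
[P_{p,q+1},\phi]g=(I-P_{p,q+1})\left(\sum_{j=1}^{n}\bigl([P_{p,q},\phi]g_j\bigr)\wedge d\zb_j\right),
\]
is proved by pure orthogonality (pairing against $h\perp K^2_{(p,q+1)}(\D)$ and using that $P_{p,q}(\phi g_j)\wedge d\zb_j$ is $\dbar$-closed), with no regularity of $\phi$ needed. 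Crucially, the hypothesis is then applied directly to the $\dbar$-closed forms $g_j$ themselves, not, as in your plan, to a primitive $u_n$ lying in the orthogonal complement of the space where the hypothesis lives.
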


\begin{remark}
Compactness of $[P_{p,q},\phi]$ on $K^2_{(p,q)}(\D),$ for a fixed $\phi$, does
not necessarily imply compactness of $[P_{p,q},\phi]$ on $L^2_{(p,q)}(\D),$ in
general. One can construct a counterexample as follows: Let $\D\subset \C^n$ be
the polydisk, $0\leq p\leq n,0\leq q\leq n-2,$ and consider $\phi=z_j$ for
$1\leq j\leq n.$ Then $[P_{p,q},z_j]=0$ on $K^2_{(p,q)}(\D),$ hence it is compact. On
the other hand, if $[P_{p,q},z_j]$ were compact  on $L^2_{(p,q)}(\D )$, then the
adjoint $[P_{p,q},z_j]^*=-[P_{p,q},\zb_j]$ would be compact on $L^2_{(p,q)}(\D)$
as well. We note that 
\[-[P_{p,q},\zb_j] g= \dbar^*N_{pq+1}(d\zb_j\wedge g)\]
for $g\in K^2_{(p,q)}(\D).$ In addition, one can show that any 
$f\in A^2_{(p,q+1)}(\D)$ can be written as
$f=\sum_{j=1}^nf_j\wedge d\zb_j$ where $f_j\in A^2_{(p,q)}(\D)$ and
$\sum_{j=1}^n\|f_j\|^2=\|f\|^2$ (see the proof of Corollary \ref{Cor2} in the
next section). Then we have
\[\dbar^*N_{p,q+1}f=(-1)^{p+q+1}\sum_{j=1}^n[P_{p,q},\zb_j]f_j.\]
Therefore, if $[P_{p,q},z_j]$ were compact on $L^2_{(p,q)}(\D)$ for 
$1\leq j\leq n$ the operator $\dbar^*N_{p,q+1}$ would be compact on
$A^2_{(p,q+1)}(\D).$ However, this contradicts  \cite[Theorem 1.1]{FuStraube98}.
We note that even though \cite[Theorem 1.1]{FuStraube98} is stated on 
$L^2_{(0,q)}(\D)$ the proof only uses forms with holomorphic coefficients and
the proof is valid on $(p,q)$-forms as well 
(see remarks 2 and 4 in \cite[pg. 638]{FuStraube98}).
\end{remark}

%%%%%%%%%%%%%%%%%%%%%%%%%%%%%%%%%%%%%%%%%%%%%%%%%%%%%%%%%
%%%%%%%%%%%%%%%%%%%%%%%%%%%%%%%%%%%%%%%%%%%%%%%%%%%%%%%%%
\section{Proofs of Theorem \ref{Thm1} and \ref{Thm2}} \label{SecProofs} 
The proof of Theorem \ref{Thm1} will be based on several lemmas whose proofs are 
standard  if one is familiar with the basics of the $\dbar$-Neumann problem. 

\begin{lemma}\label{LemDbarClosed}
Let $\D$ be a bounded pseudoconvex domain in $\C^n$ for $n\geq 2$ and 
$g \in K^2_{(p,q+1)}(\D)$ where $0\leq p\leq n$ and $1\leq q\leq n-1.$  Then 
there exist $g_j\in K^2_{(p,q)}(\D)$ for $1\leq j\leq n$ such that 
\[g=\sum_{j=1}^{n} g_j\wedge d\zb_j \text{ and } \sum_{j=1}^n\|g_j\|\lesssim \|g\|.\]
\end{lemma}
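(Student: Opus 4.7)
My plan is to build the decomposition by taking pointwise interior products of $g$ with $\partial/\partial\bar z_j$ to obtain an $L^2$-decomposition, then project each piece onto $K^2_{(p,q)}(\Omega)$ via the Bergman projection, and finally use the $\bar\partial$-closedness of $g$ together with Kohn's formula to verify that the projection does not change the sum.

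First I use the pointwise identity $\sum_{j=1}^n d\bar z_j\wedge\iota_{\partial/\partial\bar z_j}=(q+1)\,\mathrm{id}$ on $(p,q+1)$-forms. Setting $h_j:=\frac{(-1)^p}{q+1}\iota_{\partial/\partial\bar z_j}g\in L^2_{(p,q)}(\Omega)$, the identity gives $g=\sum_{j=1}^n h_j\wedge d\bar z_j$, with $\|h_j\|_{L^2}\le\|g\|_{L^2}$ by the pointwise bound for interior products. These $h_j$'s need not be $\bar\partial$-closed.

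Second I project: set $g_j:=P_{p,q}h_j\in K^2_{(p,q)}(\Omega)$. Since $P_{p,q}$ is a contraction, $\|g_j\|\le\|h_j\|\le\|g\|$, which already yields $\sum_{j=1}^n\|g_j\|\le n\|g\|\lesssim\|g\|$. What remains is to show $g=\sum_{j=1}^n g_j\wedge d\bar z_j$, i.e., that the error $\sum_{j=1}^n(h_j-g_j)\wedge d\bar z_j$ vanishes. Each $h_j-g_j=(I-P_{p,q})h_j$ lies in $(K^2_{(p,q)}(\Omega))^\perp$, which on a bounded pseudoconvex domain equals the range of $\bar\partial^*\colon L^2_{(p,q+1)}\to L^2_{(p,q)}$ by the closed range theorem for $\bar\partial$. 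Because $g$ is $\bar\partial$-closed, $\sum_j\bar\partial h_j\wedge d\bar z_j=\bar\partial g=0$ as an $L^2_{(p,q+2)}$-identity, and I would use this relation together with Kohn's formula $I-P_{p,q}=\bar\partial^* N_{p,q+1}\bar\partial$ to force the vanishing of the error after wedging with $d\bar z_j$.

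The main obstacle is exactly this last cancellation step. Individually, $\bar\partial h_j$ exists only as a distribution (since $h_j$ need not lie in $\mathrm{dom}(\bar\partial)$), so one cannot apply Kohn's formula term by term. The argument has to treat the combined identity $\sum_j\bar\partial h_j\wedge d\bar z_j=0$ as a single $L^2_{(p,q+2)}$-relation and push it through the canonical Neumann solver as a whole; pseudoconvexity enters essentially via the closed range of $\bar\partial$ at level $(p,q+1)$, which guarantees that this combined distributional identity translates into the precise $L^2$-cancellation $\sum_j\bar\partial^* N_{p,q+1}(\bar\partial h_j)\wedge d\bar z_j=0$ needed to conclude $\sum_j g_j\wedge d\bar z_j=g$. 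The hypothesis $q\ge 1$ is needed so that the interior products $\iota_{\partial/\partial\bar z_j}g$ are nonzero $(p,q)$-forms in the first place.
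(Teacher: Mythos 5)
Your first step (the pointwise splitting $g=\sum_j h_j\wedge d\zb_j$ with $\|h_j\|\leq\|g\|$) is fine apart from a harmless sign (the factor should be $(-1)^{p+q}/(q+1)$, not $(-1)^p/(q+1)$). The fatal problem is the step you yourself flag as the ``main obstacle'': it is not a technical obstacle but a false claim. The cancellation $\sum_{j}\bigl((I-P_{p,q})h_j\bigr)\wedge d\zb_j=0$, equivalently $\sum_j\dbar^*N_{p,q+1}(\dbar h_j)\wedge d\zb_j=0$, simply does not hold, so your forms $g_j=P_{p,q}h_j$ do not reassemble to $g$. Counterexample in the lowest admissible case $n=2$, $p=0$, $q=1$: let $\D$ be the unit ball in $\C^2$ and $g=\zb_1\,d\zb_1\wedge d\zb_2$, which is $\dbar$-closed (top degree). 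Your construction (with the corrected sign) gives $h_1=-\tfrac12\zb_1\,d\zb_2$ and $h_2=\tfrac12\zb_1\,d\zb_1$. Now $h_2$ is already $\dbar$-closed, so $(I-P_{0,1})h_2=0$, while for $h_1$ Kohn's formula and the fact that the $\dbar$-Neumann problem on $(0,2)$-forms in $\C^2$ is the Dirichlet problem for $-\tfrac14\Delta$ on the coefficient (cf.\ Remark \ref{InterestingObservation1}) give
\[(I-P_{0,1})(\zb_1\,d\zb_2)=\dbar^*N_{0,2}(d\zb_1\wedge d\zb_2)
=\dbar^*\Bigl(\tfrac12(1-|z|^2)\,d\zb_1\wedge d\zb_2\Bigr)
=-\tfrac{\zb_2}{2}\,d\zb_1+\tfrac{\zb_1}{2}\,d\zb_2,\]
since $v=\tfrac12(1-|z|^2)$ solves $-\tfrac14\Delta v=1$ with $v=0$ on the boundary. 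The $d\zb_2$-component survives wedging with $d\zb_1$:
\[\sum_{j=1}^2\bigl((I-P_{0,1})h_j\bigr)\wedge d\zb_j
=-\tfrac12\Bigl(-\tfrac{\zb_2}{2}\,d\zb_1+\tfrac{\zb_1}{2}\,d\zb_2\Bigr)\wedge d\zb_1
=\tfrac{\zb_1}{4}\,d\zb_1\wedge d\zb_2=\tfrac14\,g\neq 0.\]
Thus $\sum_j(P_{0,1}h_j)\wedge d\zb_j=\tfrac34\,g\neq g$; no closed-range or distributional argument can rescue an identity that fails for an explicit form with smooth coefficients. Note also that the discrepancy is not a universal multiple of $g$ (if $g=d\zb_1\wedge d\zb_2$ the $h_j$ are already closed and the error is $0$), so no rescaling can repair the construction.

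A secondary inaccuracy: your explanation of where $q\geq1$ enters is wrong. For $q=0$ the contractions $\iota_{\partial/\partial\zb_j}g$ of a $(p,1)$-form are perfectly good nonzero $(p,0)$-forms; what fails at $q=0$ is the conclusion of the lemma itself, as in Remark \ref{Remq=0}. The paper's proof avoids your projection problem by working one degree \emph{down} rather than projecting at the same degree: it takes the canonical solution $f=\dbar^*N_{p,q+1}g$, a $(p,q)$-form with $\dbar f=g$ and $\dbar^*f=0$, splits $f=\sum_j f_j\wedge d\zb_j$ into $(p,q-1)$-form pieces $f_j$ (this is where $q\geq1$ is genuinely needed), and sets $g_j:=\dbar f_j$. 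Then $\dbar$-closedness of the $g_j$ is automatic, $\sum_j g_j\wedge d\zb_j=\dbar f=g$ holds exactly, and the bound $\sum_j\|g_j\|\lesssim\|g\|$ follows from the basic estimate $\sum_{j,k}\|\partial f_j/\partial\zb_k\|\lesssim\|\dbar f\|+\|\dbar^*f\|=\|g\|$ valid on bounded pseudoconvex domains \cite[Corollary 2.13]{StraubeBook}. That device---obtaining closedness for free by applying $\dbar$, instead of imposing it by projection---is the idea your argument is missing.
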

\begin{proof}
Let $0\leq p\leq n,1\leq q\leq n-1,$ and
$f=\sideset{}{'}\sum_{|I|=p,|J|=q}f_{IJ}dz_I\wedge d\zb_J=\dbar^*N_{p,q+1}g.$ Then we can write 
\[f= \sum_{j=1}^nf_j\wedge d\zb_j\]
where $f_j$'s are square integrable  $(p,q-1)$-forms so that  there are no common terms
between  $f_j\wedge d\zb_j$ and $f_k\wedge d\zb_k$ if $j\neq k.$ This can be done as
follows:  Let $\vee$ denote the adjoint of the exterior multiplication. That is, if $f$
is a $(p,q)$-form $d\zb_j \vee f$ is a $(p,q-1)$-form such that 
$\langle h\wedge d\zb_j,f \rangle =\langle h,d\zb_j\vee f \rangle$ for all 
$h\in C^{\infty}_{(p,q-1)}(\C^n).$ Then we define 
\begin{align*}
f_1&=d\zb_1\vee f\\
f_j&=d\zb_j\vee \left(f-\sum_{k=1}^{j-1}f_k\wedge d\zb_k\right)\text{ for } j=2,3, \ldots, n.
\end{align*}
Namely, $f_1$ is defined by collecting all terms that contain $d\zb_1$ and writing
their sum as $f_1\wedge d\zb_1.$ Then we define $f_2$ by collecting the terms in
$f-f_1\wedge d\zb_1$ with $d\zb_2$ and writing their sum as $f_2\wedge d\zb_2$ etc. Since
$\dbar g=0$ and $f$ is in the range of $\dbar^*,$ we have $\dbar f=g$ and $\dbar^* f=0.$
So $f$ is in the domains of $\dbar$ and $\dbar^*.$ Also since $f_j$ consists of terms
$f_{IJ}$ for some $|I|=p$ and $|J|=q$,  ``bar" derivatives of $f_j$'s come from ``bar"
derivatives of $f.$ Then  
\[\sum_{j,k=1}^n \left\|\frac{\partial f_j}{\partial \zb_k}\right\| \lesssim
\sum_{|I|=p,|J|=q}\sum_{k=1}^n  \left\|\frac{\partial f_{IJ}}{\partial \zb_k}\right\|. \] 
This fact together with \cite[Corollary 2.13]{StraubeBook} imply that
\[\sum_{j,k=1}^n \left\|\frac{\partial f_j}{\partial \zb_k}\right\| \lesssim
\|\dbar f\|+\|\dbar^*f\|=\|g\|.\]
Hence, $\|\dbar f_j\|\lesssim \|g\|$ for every $j$ and 
\[g=\dbar f=\sum_{j=1}^n\dbar f_j\wedge d\zb_j.\]
Therefore, if we define $g_j=\dbar f_j$ we have  
$g=\sum_{j=1}^{n} g_j\wedge d\zb_j$ and
$\sum_{j=1}^n\|g_j\|\lesssim \|g\|.$
\end{proof}

\begin{remark} \label{Remq=0}
Our proof of Theorem \ref{Thm1} depends on Lemma \ref{LemDbarClosed}. 
Lemma \ref{LemDbarClosed} fails for $q=0$ and hence the implication iii.  
implies i.  is not known when $q=0$.  One can see
that Lemma \ref{LemDbarClosed} is not true for $q=0$ as follows: Let
$g=\phi(z_1)d\zb_1$  where  $\phi$ is a non-holomorphic function that is
smooth on $\Dc.$ Then $g$ is  $\dbar$-closed but there is no holomorphic
function $g_1$ such that $g=g_1d\zb_1.$ Also it is interesting that the proof of
Lemma \ref{LemDbarClosed} requires the existence of $(p,q-1)$-forms (that is,
two form-levels below the starting form-level). 
\end{remark}

The following lemma shows that the converse of \cite[Proposition 4.1]{StraubeBook} is 
true.

\begin{lemma}\label{LemComCanonical} 
Let $\D$ be a bounded pseudoconvex domain in $\C^n, n\geq 2,$ and 
$0\leq p\leq n,1\leq q\leq n-1.$ Then $[P_{p,q},\zb_j]$ is compact on $K^2_{(p,q)}(\D)$
for all  $1\leq j\leq n$ if and only if $\dbar^*N_{p,q+1}$ is compact on
$L^2_{(p,q+1)}(\D)$.
\end{lemma}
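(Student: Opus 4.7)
The bridge between the two compactness statements is the identity $[P_{p,q}, \zb_j] f = -\dbar^* N_{p,q+1}(d\zb_j \wedge f)$ for $f \in K^2_{(p,q)}(\D)$, which is the specialization to $\phi = \zb_j$ of the display in the introduction (using $\dbar \zb_j = d\zb_j$). My plan is to exploit this identity in both directions.

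The ``$\Leftarrow$'' direction is immediate: the wedge map $f \mapsto d\zb_j \wedge f$ is bounded from $K^2_{(p,q)}(\D)$ into $L^2_{(p,q+1)}(\D)$, so composing with the compact operator $\dbar^* N_{p,q+1}$ yields compactness of $[P_{p,q}, \zb_j]$ on $K^2_{(p,q)}(\D)$ for each $j$.

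For ``$\Rightarrow$'', I would first reduce compactness on $L^2_{(p,q+1)}(\D)$ to compactness on $K^2_{(p,q+1)}(\D)$. The point is that $\dbar^* N_{p,q+1}$ annihilates the orthogonal complement of $K^2_{(p,q+1)}(\D)$: for $g \in (K^2_{(p,q+1)}(\D))^\perp = \overline{\mathrm{range}(\dbar^*_{p,q+2})}$, the decomposition $g = \dbar \dbar^* N_{p,q+1} g + \dbar^* \dbar N_{p,q+1} g$ together with $\dbar^* \dbar^* = 0$ forces $\dbar \dbar^* N_{p,q+1} g = 0$, so $\dbar^* N_{p,q+1} g$ lies in $\ker \dbar_{p,q} \cap \overline{\mathrm{range}(\dbar^*_{p,q+1})} = \{0\}$. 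Hence $\dbar^* N_{p,q+1} = \dbar^* N_{p,q+1} P_{p,q+1}$ on $L^2_{(p,q+1)}(\D)$, and it suffices to prove compactness on $K^2_{(p,q+1)}(\D)$. For that, I take $g \in K^2_{(p,q+1)}(\D)$, invoke Lemma \ref{LemDbarClosed} to write $g = \sum_{j=1}^{n} g_j \wedge d\zb_j$ with $g_j \in K^2_{(p,q)}(\D)$ and $\sum_j \|g_j\| \lesssim \|g\|$, and use linearity together with the bridge identity to expand
\[\dbar^* N_{p,q+1} g = (-1)^{p+q+1} \sum_{j=1}^{n} [P_{p,q}, \zb_j] g_j.\]
Given a bounded sequence $\{g^{(k)}\} \subset K^2_{(p,q+1)}(\D)$, the associated sequences $\{g_j^{(k)}\}$ are bounded in $K^2_{(p,q)}(\D)$, and a diagonal extraction using the hypothesized compactness of each $[P_{p,q}, \zb_j]$ produces a norm-convergent subsequence of $\dbar^* N_{p,q+1} g^{(k)}$.

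The substantive work lives inside Lemma \ref{LemDbarClosed}, which supplies the $\dbar$-closed coefficients $g_j$ with the required norm bound; with that tool in hand the rest is operator-theoretic bookkeeping. The one mildly delicate step to verify carefully is the identification $\dbar^* N_{p,q+1} = \dbar^* N_{p,q+1} P_{p,q+1}$, which is precisely what allows the $L^2$ statement to be deduced from the $K^2$ statement.
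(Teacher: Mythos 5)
Your proposal is correct and follows essentially the same route as the paper: the reverse direction via the bridge identity $[P_{p,q},\zb_j]f=-\dbar^*N_{p,q+1}(d\zb_j\wedge f)$, and the forward direction via the decomposition from Lemma \ref{LemDbarClosed}, the expansion $\dbar^*N_{p,q+1}g=(-1)^{p+q+1}\sum_j[P_{p,q},\zb_j]g_j$, and the reduction from $L^2_{(p,q+1)}(\D)$ to $K^2_{(p,q+1)}(\D)$ using that $\dbar^*N_{p,q+1}$ vanishes on the orthogonal complement of the $\dbar$-closed forms. The only cosmetic difference is that you prove directly (and in more detail) two facts the paper simply cites or asserts, namely the easy direction and the vanishing of $\dbar^*N_{p,q+1}$ on $\bigl(K^2_{(p,q+1)}(\D)\bigr)^{\perp}$.
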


\begin{proof} 
We only need to prove one direction as compactness of $\dbar^*N_{p,q+1}$ on
$L^2_{(p,q+1)}(\D)$ implies that $[P_{p,q},\zb_j]$ is compact on $\dbar$-closed
$(p,q)$-forms for $1\leq j\leq n$ (see \cite[Proposition 4.1]{StraubeBook}). 

To prove the other direction, assume that $\{g^k\}$ is a bounded sequence in
$K^2_{p,q+1}(\D)$. Then Lemma \ref{LemDbarClosed} implies that for each $k$ there exist 
$\dbar$-closed $(p,q)$-forms $g^k_j$ for $1\leq j\leq n$ such that $g^k=\sum_{j=1}^{n}
g^k_{j}\wedge d\zb_{j}$ and $\sum_{j=1}^{n} \|g^k_{j}\|\lesssim \|g^k\|.$ 
Then we have 
\[\dbar^*N_{p,q+1}(g^k)=(-1)^{p+q+1}\sum_{j=1}^n[P_{p,q},\zb_j](g^k_j).\]
Furthermore, if  $[P_{p,q},\zb_j]$ is compact on $\dbar$-closed $(p,q)$-forms for 
$1\leq j\leq n,$  sequences $\{[P_{p,q},\zb_j](g^k_j)\}$ have convergent subsequences for
each $j.$ Hence $\dbar^*N_{p,q+1}$ is compact on $K_{(p,q+1)}^{2}(\D)$. 
On the other hand, compactness of $\dbar^*N_{p,q+1}$ on $\dbar$-closed forms is equivalent
to compactness of $\dbar^*N_{p,q+1}$ on $L^2_{(p,q+1)}(\D)$ as $\dbar^*N_{p,q+1}$ vanishes
on the orthogonal complement of $K_{(p,q+1)}^{2}(\D).$
\end{proof}

\begin{remark}\label{InterestingObservation1}
It is interesting to observe that if $q=n-1$ then the commutators $[P_{p,n-1},\zb_j]$ are
compact on $K^2_{(p,n-1)}(\D)$  on  any bounded  pseudoconvex domain  $\D$ with
sufficiently smooth boundary. This is  a consequence of the fact that the
$\dbar$-Neumann problem on $(p,n)$-forms is the classical Dirichlet problem. See
\cite[Remark after Corollary 5.1.7.]{ChenShawBook})
\end{remark}

\begin{remark}
It is  unknown whether Lemma \ref{LemComCanonical} is true when the  commutators
are restricted to $A_{(p,q)}^2(\D)$ for $0\leq p\leq n,$ and $0\leq q\leq n-1$. That is, it is 
not known whether compactness of $[P_{p,q},\zb_j]$ on  $A_{(p,q)}^2(\D)$ for all $j$
(equivalent to compactness of $\dbar^*N_{p,q+1}$ on  $A_{(p,q+1)}^2(\D)$ by 
\cite[Remark (ii) in Section 4.1]{StraubeBook}) imply compactness of $\dbar^*N_{p,q+1}$
on $K^2_{(p,q+1)}(\D)$. 
\end{remark}

\begin{lemma}\label{LemCompPerculate} 
Let $\D$ be a bounded pseudoconvex domain in $\C^n, n\geq 2, 0\leq p\leq n,$ and
$1\leq q\leq n-1.$ Then compactness of $\dbar^*N_{p,q}$ on $K^2_{(p,q)}(\D)$ implies
that $\dbar^*N_{p,q+1}$ is compact on $K^2_{(p,q+1)}(\D).$ 
\end{lemma}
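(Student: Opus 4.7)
The plan is to lift compactness from level $(p,q)$ to $(p,q+1)$ by turning a bounded sequence of $\dbar$-closed $(p,q+1)$-forms into data that the lower-level compactness hypothesis can act on. First, I would take a bounded sequence $\{g^k\}\subset K^2_{(p,q+1)}(\D)$ and use Lemma~\ref{LemDbarClosed} to write each $g^k = \sum_{j=1}^{n} g^k_j\wedge d\zb_j$ with $g^k_j \in K^2_{(p,q)}(\D)$ and $\sum_j \|g^k_j\| \lesssim \|g^k\|$; for each $j$, the sequence $\{g^k_j\}_k$ is then bounded in $K^2_{(p,q)}(\D)$.

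Next, I would set $y^k_j = \dbar^* N_{p,q} g^k_j$. Since $g^k_j$ is $\dbar$-closed and $\dbar$ commutes with $N$, we have $\dbar N_{p,q} g^k_j = N_{p,q+1}\dbar g^k_j = 0$, so Kohn's identity $\Box_{p,q} N_{p,q} = I$ collapses to $\dbar\dbar^* N_{p,q} g^k_j = g^k_j$; in particular, $\dbar y^k_j = g^k_j$. The compactness hypothesis on $\dbar^* N_{p,q}$, combined with a finite diagonal extraction over $j \in \{1,\dots,n\}$, would then yield a subsequence along which each $\{y^k_j\}_k$ converges in $L^2_{(p,q-1)}(\D)$. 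Assembling $z^k = \sum_j y^k_j \wedge d\zb_j \in L^2_{(p,q)}(\D)$, a short exterior-algebra computation gives $\dbar z^k = \sum_j g^k_j\wedge d\zb_j = g^k$, so $z^k$ is an (a priori non-canonical) $L^2$ solution of $\dbar z = g^k$.

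The closing move is to recognize that the canonical solution $\dbar^* N_{p,q+1} g^k$, being the unique solution of $\dbar v = g^k$ orthogonal to $K^2_{(p,q)}(\D)$, equals $(I - P_{p,q}) z^k$. Since $\{z^k\}$ converges along the chosen subsequence and $P_{p,q}$ is bounded, $\{\dbar^* N_{p,q+1} g^k\}$ converges too, which is the desired compactness on $K^2_{(p,q+1)}(\D)$. The obstacle I anticipate is conceptual rather than technical: rather than chasing a direct functional-analytic identity between $\dbar^* N_{p,q+1}$ and $\dbar^* N_{p,q}$, one must see that the correct strategy is to first build some $L^2$ solution by stacking the canonical solutions at level $(p,q)$ and then project away the $\dbar$-closed part via $I - P_{p,q}$ to recover the canonical solution at level $(p,q+1)$; everything else is routine once this observation is in hand and Lemma~\ref{LemDbarClosed} supplies the $\dbar$-closed decomposition.
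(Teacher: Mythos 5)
Your proposal is correct and follows essentially the same route as the paper: decompose $g^k$ via Lemma~\ref{LemDbarClosed}, stack the canonical solutions at level $(p,q)$ to get a solution $f^k=\sum_j \dbar^*N_{p,q}(g^k_j)\wedge d\zb_j$ of $\dbar f^k=g^k$ (your $z^k$), and conclude compactness of the canonical solution operator $\dbar^*N_{p,q+1}$. You merely make explicit the final step the paper leaves implicit, namely that the canonical solution is recovered from any compact solution operator by applying the bounded projection $I-P_{p,q}$.
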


\begin{proof}
Let $\{g^k\}$ be a bounded sequence of  $\dbar$-closed $(p,q+1)$-forms. Then by 
Lemma \ref{LemDbarClosed} there exist  $\dbar$-closed $(p,q)$-forms  $g^{k}_{j}$'s 
such that  
$g^k=\sum_{j=1}^ng^k_j\wedge d\zb_j$ and $\sum_{j=1}^n \|g^k_j\|\lesssim \|g^k\|.$  

Let us define $f^k=\sum_{j=1}^n\dbar^*N_{p,q}(g^k_j)\wedge d\zb_j.$  Then $\dbar f^k=g^k$
and  compactness  of $\dbar^*N_{p,q}$ implies that $\{f^k\}$ has a  convergent
subsequence.  Therefore, $\dbar$ has a compact solution operator on $(p,q+1)$-forms.
Hence, the canonical solution operator,  $\dbar^*N_{p,q+1},$ is compact on
$K^2_{(p,q+1)}(\D).$ 
\end{proof}

\begin{remark}\label{RemPercolate}
One corollary of Lemma \ref{LemCompPerculate} is the well known fact that
compactness of $N_{p,q}$  implies compactness of $N_{p,q+1}$ (see 
\cite[Proposition 4.5]{StraubeBook}). One can see this using Lemma
\ref{LemCompPerculate} together  with  Range's formula
(\cite{Range84}): 
\[N_{p,q}=(\dbar^*N_{p,q})^*\dbar^*N_{p,q}+\dbar^*N_{p,q+1}(\dbar^*N_{p,q+1})^*.\] 
\end{remark}

\begin{lemma}\label{LemAlgebra}
Let $A,B,C$ be bounded operators on a Hilbert space. Then $[A,BC] = [A,B] C + B[A,C]$. 
Furthermore, if $[A,B]$ and $[A,C]$ are compact, then so is $[A,BC]$.
\end{lemma}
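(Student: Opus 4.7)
The plan is to verify the identity by direct computation and then invoke the standard ideal property of the compact operators. Both parts are formal manipulations with no real analytic content, so I expect no serious obstacle.

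First, I would expand both sides of the proposed identity using only the definition $[X,Y]=XY-YX$. On the right,
\[
[A,B]C + B[A,C] = (AB-BA)C + B(AC-CA) = ABC - BAC + BAC - BCA = ABC - BCA,
\]
which is precisely $[A,BC]$. No hypothesis on $A$, $B$, $C$ beyond associativity is used, so this step works for any elements of any associative algebra (and in particular for bounded operators on a Hilbert space).

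For the compactness assertion, I would appeal to the fact that the ideal $\mathcal{K}(H)$ of compact operators is a two-sided ideal in the algebra $\mathcal{B}(H)$ of bounded operators. Thus, assuming $[A,B]$ and $[A,C]$ are compact, the products $[A,B]\,C$ and $B\,[A,C]$ are again compact, and their sum, which equals $[A,BC]$ by the identity just established, is therefore compact.

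The only ingredient beyond formal algebra is the ideal property of $\mathcal{K}(H)$, which is standard. I do not anticipate any obstacle; the lemma is essentially a packaged form of the Leibniz rule for commutators combined with the fact that compact operators absorb bounded operators on either side.
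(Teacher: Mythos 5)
Your proof is correct and matches the paper's argument: the same direct expansion $[A,B]C+B[A,C]=ABC-BAC+BAC-BCA=[A,BC]$, followed by the two-sided ideal property of the compact operators. Nothing to add.
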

\begin{proof}
The equality $[A,BC] = [A,B] C + B[A,C]$ is immediate as 
\[[A,B] C + B[A,C] = ABC-BAC + BAC - BCA = [A,BC]. \]
Hence if $[A,B]$ and $[A,C]$ are compact,  then so are $[A,B] C$ and 
$B[A,C]$, and the result follows. 
\end{proof}

Let us define 
\[\Gamma(p,q)=\{\phi\in C(\Dc):[P_{p,q},\phi]: L^2_{(p,q)}(\D)\to L^2_{(p,q)}(\D)
\text{ is compact}\}.\]

\begin{corollary}\label{CorAlgebra}
Let $\D$ be a bounded domain in $\C^n.$ Then $\Gamma(p,q)$  is a $C^*$ subalgebra
(with identity) of $C(\Dc).$ 
\end{corollary}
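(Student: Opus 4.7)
The plan is to verify the five structural properties of a unital $C^*$-subalgebra directly: the identity lies in $\Gamma(p,q)$, it is a linear subspace, it is closed under multiplication, closed under complex conjugation, and closed in the sup norm. The verifications should be short and each one reduces to a single elementary observation about the commutator map $\phi \mapsto [P_{p,q},\phi]$.

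First I would observe that $[P_{p,q},1] = 0$ is compact, so the constant function $1$ belongs to $\Gamma(p,q)$, and that the bilinearity of the commutator yields $[P_{p,q},a\phi+b\psi] = a[P_{p,q},\phi] + b[P_{p,q},\psi]$, from which linearity of $\Gamma(p,q)$ follows since sums and scalar multiples of compact operators are compact. For multiplicative closure I would invoke Lemma \ref{LemAlgebra} with $A = P_{p,q}$, $B$ the multiplication by $\phi$, and $C$ the multiplication by $\psi$; the identity $[A,BC] = [A,B]C + B[A,C]$ together with the compactness clause in that lemma gives that $\phi\psi \in \Gamma(p,q)$ whenever $\phi,\psi \in \Gamma(p,q)$.

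For the $*$-operation I would use that $P_{p,q}$ is an orthogonal projection, hence self-adjoint, and that the Hilbert space adjoint of multiplication by $\phi$ is multiplication by $\overline{\phi}$. A direct computation then gives
\[
[P_{p,q},\phi]^* \;=\; (P_{p,q}M_\phi - M_\phi P_{p,q})^* \;=\; M_{\overline{\phi}}P_{p,q} - P_{p,q}M_{\overline{\phi}} \;=\; -[P_{p,q},\overline{\phi}],
\]
so compactness transfers from $[P_{p,q},\phi]$ to $[P_{p,q},\overline{\phi}]$ and $\Gamma(p,q)$ is closed under conjugation. Finally, for norm-closedness, if $\phi_k \to \phi$ uniformly on $\overline{\Omega}$ with $\phi_k \in \Gamma(p,q)$, then since $\|M_{\phi_k-\phi}\| = \|\phi_k-\phi\|_\infty$ and $\|P_{p,q}\|=1$, we have $\|[P_{p,q},\phi_k] - [P_{p,q},\phi]\| \le 2\|\phi_k-\phi\|_\infty \to 0$, and the operator-norm limit of compact operators is compact, giving $\phi \in \Gamma(p,q)$.

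I do not expect any genuine obstacle here: every step is a one-line consequence of the lemma just proved or of standard facts about commutators, projections, and compact operators. The mild subtlety, if any, is simply remembering to invoke the self-adjointness of $P_{p,q}$ in the conjugation step and the uniform bound $\|P_{p,q}\| = 1$ in the norm-closure step; both are automatic since $P_{p,q}$ is an orthogonal projection.
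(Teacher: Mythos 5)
Your proof is correct and follows essentially the same route as the paper: identity via $[P_{p,q},1]=0$, multiplicative closure via Lemma \ref{LemAlgebra}, conjugation via $[P_{p,q},\phi]^*=-[P_{p,q},\overline{\phi}]$, and norm-closedness from compactness being preserved under operator-norm limits. You simply spell out the linearity and the uniform bound $\|[P_{p,q},\phi_k]-[P_{p,q},\phi]\|\le 2\|\phi_k-\phi\|_\infty$ more explicitly than the paper does.
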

\begin{proof} 
The fact that compactness is preserved under the operator norm topology implies that 
$\Gamma(p,q)$ is a closed subspace of  $C(\Dc).$   Lemma \ref{LemAlgebra} implies that
$\Gamma(p,q)$  is a subalgebra  of $C(\Dc)$ and $1\in \Gamma(p,q)$ because
$[P_{p,q},1]=0.$ Finally, the fact that $\Gamma (p,q)$ is closed under conjugation
follows from the formula  $[P_{p,q},\phi]^*=-[P_{p,q},\overline{\phi}]$ and the
fact that an operator is compact if and only if so is its adjoint.
\end{proof}

We note that Corollary \ref{CorAlgebra} gives a characterization for compactness of
$N_{p,q}$ for $q\geq 1$ in terms of $C^*$ algebras. Namely, for $1\leq q\leq n-1$ the
operator  $N_{p,q+1}$ is compact if and only if $\Gamma (p,q)=C(\Dc)$ (compare to
\cite{Salinas91}).

We will use the followig well known fact (see, for example, \cite[Proposition 4.2]{StraubeBook}) 
in the proof of Theorem \ref{Thm1}
\begin{proposition}\label{Prop1}
Let $\D$ be a bounded pseudoconvex domain in $\C^n$ and $0\leq p\leq n, 1\leq q\leq n$. 
Then $N_{p,q}$ is compact on $L^2_{(p,q)}(\D)$ if and only if  $\dbar^*N_{p,q}$ and 
$\dbar^*N_{p,q+1}$ are compact on $K^2_{(p,q)}(\D)$ and $K^2_{(p,q+1)}(\D)$, respectively. 
\end{proposition}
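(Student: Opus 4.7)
The plan is to build both implications around Range's formula
\[N_{p,q}=(\dbar^*N_{p,q})^*\dbar^*N_{p,q}+\dbar^*N_{p,q+1}(\dbar^*N_{p,q+1})^*,\]
already quoted in Remark \ref{RemPercolate}, together with the observation that $\dbar^*N_{p,r}$ annihilates the orthogonal complement $(K^2_{(p,r)}(\D))^\perp$. The latter follows from the intertwining relation $\dbar^*N_{p,r}=N_{p,r-1}\dbar^*$ on the domain of $\dbar^*$, which yields $\dbar^*N_{p,r}\dbar^*=0$ on that domain; passing to the closure of the range of $\dbar^*$, which equals $(K^2_{(p,r)}(\D))^\perp$, completes the verification. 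Consequently, compactness of $\dbar^*N_{p,r}$ on $K^2_{(p,r)}(\D)$ is equivalent to compactness on all of $L^2_{(p,r)}(\D)$.

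For the ($\Leftarrow$) direction I would assume that both $\dbar^*N_{p,q}$ and $\dbar^*N_{p,q+1}$ are compact on the respective spaces of $\dbar$-closed forms. By the preceding extension, they are compact on the full $L^2$ spaces, hence so are their adjoints. Range's formula then writes $N_{p,q}$ as a sum of two operators of the form $S^*S$ and $TT^*$ with $S,T$ compact, and compactness is preserved by adjoints, products with bounded operators, and finite sums.

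For the ($\Rightarrow$) direction, suppose $N_{p,q}$ is compact. Given a bounded sequence $\{f_k\}$ in $L^2_{(p,q)}(\D)$, I would exploit the energy identity
\[\|\dbar^*N_{p,q}(f_j-f_k)\|^2+\|\dbar N_{p,q}(f_j-f_k)\|^2=\langle f_j-f_k,N_{p,q}(f_j-f_k)\rangle\leq \|f_j-f_k\|\cdot\|N_{p,q}(f_j-f_k)\|.\]
Passing to a subsequence along which $\{N_{p,q}f_k\}$ converges, which is possible because $N_{p,q}$ is compact, forces the right-hand side to zero, so $\dbar^*N_{p,q}$ is compact on $L^2_{(p,q)}(\D)$, and in particular on $K^2_{(p,q)}(\D)$. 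An application of Lemma \ref{LemCompPerculate} then yields compactness of $\dbar^*N_{p,q+1}$ on $K^2_{(p,q+1)}(\D)$.

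The main subtlety is the vanishing of $\dbar^*N_{p,r}$ on $(K^2_{(p,r)}(\D))^\perp$, which is what upgrades $K^2$-compactness to $L^2$-compactness and makes Range's formula directly usable. Once that point is in hand the remaining steps are routine manipulations with compact operators plus the standard fact that $\dbar^*N_{p,q}$ and $\dbar N_{p,q}$ are bounded whenever $N_{p,q}$ is.
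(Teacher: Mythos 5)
The paper does not actually prove Proposition \ref{Prop1}: it is stated as a ``well known fact'' with a citation to \cite[Proposition 4.2]{StraubeBook}, so there is no in-paper argument to compare against. Your proof is correct and is essentially the standard argument behind that cited result: the forward direction via the energy identity $\|\dbar N_{p,q}u\|^2+\|\dbar^*N_{p,q}u\|^2=\langle u,N_{p,q}u\rangle$, the reverse direction via Range's formula, and the key bookkeeping point that $\dbar^*N_{p,r}$ kills $(K^2_{(p,r)}(\D))^\perp=\overline{\mathrm{range}(\dbar^*)}$ (your justification via $\dbar^*N_{p,r}\dbar^*=N_{p,r-1}\dbar^*\dbar^*=0$ is sound, and is the same fact the paper itself invokes at the end of Lemma \ref{LemComCanonical}), which upgrades compactness on the $\dbar$-closed forms to compactness on all of $L^2$ so that Range's formula applies. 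Two small remarks. First, in the forward direction your appeal to Lemma \ref{LemCompPerculate} only covers $1\leq q\leq n-1$, whereas the proposition allows $q=n$; there the conclusion about $\dbar^*N_{p,n+1}$ is vacuous since $L^2_{(p,n+1)}(\D)=\{0\}$, so this is only a missing sentence, not a gap. Second, the route in \cite{StraubeBook} gets $\dbar^*N_{p,q+1}$ directly from the same energy identity: it shows $\dbar N_{p,q}$ is compact and identifies $\dbar^*N_{p,q+1}=(\dbar N_{p,q})^*$, which is marginally more self-contained than routing through the percolation lemma; either way the argument closes.
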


\begin{proof}[Proof of Theorem \ref{Thm1}]
We note that i. $\Rightarrow$ ii. is known (see \cite[Proposition 4.2]{StraubeBook}) and
ii. $\Rightarrow$ iii. is easy because   $[P_{p,q},\zb_j]
(f)=-\dbar^*N_{p,q+1}(d\zb_j \wedge f)$ for $f\in K^2_{(p,q)}(\D).$ 

Now we will prove iii. $\Rightarrow$ i.: Compactness of $[P_{p,q},\zb_j]$ on
$K^2_{(p,q)}(\D)$ for $1\leq j\leq n,$ by Lemma \ref{LemComCanonical}, is equivalent
to compactness of $\dbar^*N_{p,q+1}$ on $K^2_{(p,q+1)}(\D)$. On the other hand,
if $q=n-1$ then compactness of  $\dbar^*N_{p,q+1}$ is equivalent to compactness
of $N_{p,q+1}.$ In case $1\leq q\leq n-2,$ Lemma \ref{LemCompPerculate} implies
that compactness of $\dbar^*N_{p,q+1}$ on $K^2_{(p,q+1)}(\D)$ implies 
compactness of $\dbar^*N_{p,q+2}$ on $K^2_{(p,q+2)}(\D)$. Then 
Proposition \ref{Prop1} implies that $N_{p,q+1}$ is compact on
$L^2_{(p,q+1)}(\D).$

The implication ii. $\Rightarrow$ iv. follows from the fact that 
$\dbar^*N_{p,q+1}$ vanishes on the orthogonal complement of
$K^2_{(p,q+1)}(\Omega)$ and \cite[Proposition 4.1]{StraubeBook}.

We prove the implication iv. $\Rightarrow$ v. as follows:  By Corollary \ref{CorAlgebra}
we know that $\Gamma(p,q)$ is a closed subalgebra with identity (of $C(\Dc)$) that is
closed under conjugation. Then the assumption that the commutators $[P_{p,q},\zb_j]$ are
compact on $L^2_{(p,q)}(\D),$ for $1\leq j\leq n,$ together with the  Stone-Weierstrass 
Theorem imply that $\Gamma (p,q)=C(\Dc).$ That is $[P_{p,q},\phi]$ is compact for all
$\phi\in C(\Dc).$ 

Finally, the implications v. $\Rightarrow$ vi.  and   vi. $\Rightarrow$ iii. are obvious.
\end{proof}

\begin{proof}[Proof of Theorem\ref{Thm2}] 
If $q=n-1$ then  $K^2_{(p,q+1)}(\D)= L^2_{(p,n)}(\D)$ and $[P_{p,q+1},\phi]$ is the zero
operator, hence compact. So for the rest of the proof we may assume that $n\geq 3$ and
$1\leq q\leq n-2.$ 

Let $g \in K^2_{(p,q+1)}(\D)$. Then Lemma \ref{LemDbarClosed} implies that there exist
$g_j\in K^2_{(p,q)}(\D)$ for $1\leq j\leq n$ such that 
\[g=\sum_{j=1}^{n} g_j\wedge d\zb_j \text{ and } \sum_{j=1}^n\|g_j\|\lesssim
\|g\|.\]
Now  we will show that 
\begin{align}\label{EqnDbarClosed}
[P_{p,q+1},\phi]g=(I-P_{p,q+1})\left(\sum_{j=1}^n ([P_{p,q},\phi]g_j)\wedge
d\zb_j\right).
\end{align}
Since both sides of \eqref{EqnDbarClosed} are orthogonal to $K^2_{(p,q+1)}(\D)$ we
only need to show that for any $h\in L^2_{(p,q+1)}(\D)$ that is orthogonal to
$K^2_{(p,q+1)}(\D)$ we have 
\[ \left\langle [P_{p,q+1},\phi]g-(I-P_{p,q+1})\left(\sum_{j=1}^n
([P_{p,q},\phi]g_j)\wedge d\zb_j\right),h\right\rangle =0\] 
where $\langle,.,\rangle$ denotes the inner product on $L^2_{(p,q+1)}(\D).$ One can
compute that 
\begin{align*}
 \left\langle [P_{p,q+1},\phi]g-(I-P_{p,q+1})\left(\sum_{j=1}^n
([P_{p,q},\phi]g_j)\wedge d\zb_j\right),h \right\rangle = &-\langle \phi g, h\rangle -
\left\langle \sum_{j=1}^n P_{p,q}(\phi g_j)\wedge d\zb_j , h\right\rangle\\
&+\left\langle \sum_{j=1}^n\phi g_j\wedge d\zb_j,h\right\rangle \\
=&-\left\langle \sum_{j=1}^n P_{p,q}(\phi g_j)\wedge d\zb_j , h\right\rangle.
\end{align*}
The fact that $\dbar (f\wedge d\zb_j)=(\dbar f)\wedge d\zb_j$  implies that the
$(p,q+1)$-forms $P_{p,q}(\phi g_j)\wedge d\zb_j$ are $\dbar$-closed for  $j=1,\ldots,n.$
Therefore, we have $\left\langle \sum_{j=1}^n P_{p,q}(\phi g_j)\wedge d\zb_j ,
h\right\rangle=0$ and the equality \eqref{EqnDbarClosed} is proven. 

Let $\{g^k\}\subset K^2_{(p,q+1)}(\D)$ be a bounded sequence. Then Lemma
\ref{LemDbarClosed} implies that for each $k$ and $1\leq j\leq n$ there exists 
$g_j^k\in K^2_{(p,q)}(\D)$  such that $g^k=\sum_{j=1}^ng^k_j\wedge d\zb_j$ and
$\sum_{j=1}^n\|g_j^k\|\lesssim \|g^k\|.$ Furthermore, compactness of
$[P_{p,q},\phi]$ on $K^2_{(p,q)}(\D)$ implies that for each $1\leq j\leq n$ the
sequence $\{[P_{p,q},\phi]g_j^k\}$ has a convergent subsequence.  Now using
\eqref{EqnDbarClosed} we conclude that the sequence $\{[P_{p,q+1},\phi]g^k\}$
has a convergent subsequence.  Hence $[P_{p,q+1},\phi]$ is compact on
$K^2_{(p,q+1)}(\D).$  
\end{proof}

\begin{corollary} \label{Cor2}
Let $\D$ be a domain in $\C^n,0\leq p\leq n,0\leq q\leq n-1,$ and 
$\phi\in L^{\infty}(\D).$ Assume that $[P_{p,q},\phi]$ is compact on
$A^2_{(p,q)}(\D).$ Then $[P_{p,q+1},\phi]$ is compact on $A^2_{(p,q+1)}(\D)$.  
\end{corollary}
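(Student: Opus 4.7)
The plan is to mirror the proof of Theorem \ref{Thm2}, but replacing Lemma \ref{LemDbarClosed} with its trivial analogue for holomorphic forms. Observe first that the conclusion is asserted on an arbitrary bounded domain and allows $q=0$, so no pseudoconvexity and none of the $\dbar$-Neumann estimates behind Lemma \ref{LemDbarClosed} should be needed; indeed, holomorphicity of coefficients is preserved by the naive peeling-off procedure, which is precisely what makes this case easy.

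First, I would prove the elementary decomposition: every $g\in A^2_{(p,q+1)}(\D)$ admits a representation
\[g=\sum_{j=1}^{n}g_j\wedge d\zb_j,\qquad g_j\in A^2_{(p,q)}(\D),\qquad \sum_{j=1}^n\|g_j\|^2=\|g\|^2.\]
Writing $g=\sideset{}{'}\sum_{|I|=p,|J|=q+1}g_{IJ}\,dz_I\wedge d\zb_J$ with holomorphic coefficients, I would iterate the procedure from the proof of Lemma \ref{LemDbarClosed}: set $g_1=d\zb_1\vee g$ and $g_j=d\zb_j\vee\bigl(g-\sum_{k<j}g_k\wedge d\zb_k\bigr)$ for $j\geq 2$. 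Each $g_j$ is a linear combination of the $g_{IJ}$ and so is automatically holomorphic, and orthogonality of the distinct elementary basis forms appearing in the resulting sum gives the norm identity. This step is pure linear algebra on coefficients and works for all $q\geq 0$ on any domain.

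Second, I would establish the analogue of identity \eqref{EqnDbarClosed},
\[[P_{p,q+1},\phi]\,g=(I-P_{p,q+1})\!\left(\sum_{j=1}^n\bigl([P_{p,q},\phi]\,g_j\bigr)\wedge d\zb_j\right),\]
by pairing both sides with an arbitrary $h\in L^2_{(p,q+1)}(\D)$ that is orthogonal to $A^2_{(p,q+1)}(\D)$, exactly as in the proof of Theorem \ref{Thm2}. Using $P_{p,q+1}g=g$ and $P_{p,q}g_j=g_j$, the verification reduces to checking that $\bigl\langle\sum_{j=1}^n P_{p,q}(\phi g_j)\wedge d\zb_j,\,h\bigr\rangle=0$. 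The key observation, replacing the $\dbar$-closedness argument used for Theorem \ref{Thm2}, is that $P_{p,q}(\phi g_j)\in A^2_{(p,q)}(\D)$ has holomorphic coefficients in the standard basis, and wedging with $d\zb_j$ preserves this, so $P_{p,q}(\phi g_j)\wedge d\zb_j\in A^2_{(p,q+1)}(\D)$ and is therefore orthogonal to $h$.

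Third, compactness follows immediately: for a bounded sequence $\{g^k\}\subset A^2_{(p,q+1)}(\D)$, decompose $g^k=\sum_j g_j^k\wedge d\zb_j$ with each $\{g_j^k\}$ bounded in $A^2_{(p,q)}(\D)$; compactness of $[P_{p,q},\phi]$ on $A^2_{(p,q)}(\D)$ produces convergent subsequences, and a diagonal extraction over $1\leq j\leq n$ combined with the displayed identity and boundedness of $I-P_{p,q+1}$ delivers a convergent subsequence of $\{[P_{p,q+1},\phi]\,g^k\}$. I do not anticipate a real obstacle: the entire content of this corollary is that the decomposition step, which for $\dbar$-closed forms required the genuine work of Lemma \ref{LemDbarClosed} and was restricted to $q\geq 1$ on pseudoconvex domains, collapses to elementary linear algebra in the holomorphic setting, simultaneously removing both hypotheses.
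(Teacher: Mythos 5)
Your proposal is correct and follows essentially the same route as the paper: the paper proves Corollary \ref{Cor2} by running the proof of Theorem \ref{Thm2} verbatim, replacing Lemma \ref{LemDbarClosed} with exactly the elementary decomposition you describe (same $d\zb_j\vee$ peeling construction, same norm identity $\|g\|^2=\sum_j\|g_j\|^2$). Your only (harmless) variation is verifying \eqref{EqnDbarClosed} by pairing against $h$ orthogonal to $A^2_{(p,q+1)}(\D)$ and using holomorphicity of $P_{p,q}(\phi g_j)\wedge d\zb_j$, in place of the $\dbar$-closedness argument, which is the natural adaptation when the projections are onto the holomorphic-coefficient spaces.
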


\begin{proof}
First of all, we note that equation \eqref{EqnDbarClosed} is valid on all domains, 
possibly unbounded or non-pseudoconvex. With this in mind, one can prove
Corollary \ref{Cor2} the same way as Theorem \ref{Thm2} except, instead of Lemma
\ref{LemDbarClosed} we need the following fact: Let 
$g\in A^2_{(p,q+1)}(\D),0\leq p\leq n,$ and  $0\leq q\leq n-1.$ Then 
for $1\leq j\leq n$ there exists $g_j\in A^2_{(p,q)}(\D)$ such that
$g=\sum_{j=1}^ng_j\wedge d\zb_j$ and $\|g\|^2=\sum_{j=1}^n\|g_j\|^2.$ 
In fact, one can define $g_1=d\zb_1\vee g$ and 
$g_j=d\zb_j\vee \left(g-\sum_{k=1}^{j-1}g_k\wedge d\zb_k\right)$
 for  $j=2,3, \ldots, n.$
\end{proof}

\begin{remark}
Corollary \ref{Cor2} is stated for any domain in $\C^n$ and  $0\leq q\leq n-1$  while 
Theorem \ref{Thm2}  is stated for bounded pseudoconvex domains and  $1\leq q\leq n-1.$ 
\end{remark}

The followig fact is included in  \cite[Remark (ii) in pg.75]{StraubeBook}.

\begin{proposition}\label{Prop2}
Let $\D$ be a bounded pseudoconvex domain in $\C^n,0\leq p\leq n,$ and 
$0\leq q\leq n-1$. Then  $[P_{p,q},\phi]$ is compact on $A^2_{(p,q)}(\D)$ for all 
$\phi\in C(\Dc)$ if and only if $N_{p,q+1}$  is compact on $A^2_{(p,q+1)}(\D)$. 
\end{proposition}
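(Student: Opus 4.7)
The plan is to prove both directions by passing through the intermediate condition that $\dbar^{*}N_{p,q+1}$ is compact on $A^{2}_{(p,q+1)}(\D)$, which is in turn equivalent to compactness of $[P_{p,q},\zb_{j}]$ on $A^{2}_{(p,q)}(\D)$ for each $1\leq j\leq n$. The bridge is the Kohn identity
\[\dbar^{*}N_{p,q+1}(h\wedge d\zb_{j})=(-1)^{p+q+1}[P_{p,q},\zb_{j}]h,\qquad h\in A^{2}_{(p,q)}(\D),\]
obtained from $\dbar(\zb_{j}h)=d\zb_{j}\wedge h$ together with $P_{p,q}=I-\dbar^{*}N_{p,q+1}\dbar$.

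For the implication from $[P_{p,q},\phi]$ compact on $A^{2}_{(p,q)}$ for every $\phi\in C(\Dc)$ to $N_{p,q+1}$ compact on $A^{2}_{(p,q+1)}$, I first specialize to $\phi=\zb_{j}$. Given $g\in A^{2}_{(p,q+1)}$, the decomposition $g=\sum_{j=1}^{n}g_{j}\wedge d\zb_{j}$ with $g_{j}\in A^{2}_{(p,q)}$ and $\|g\|^{2}=\sum_{j}\|g_{j}\|^{2}$ from the proof of Corollary \ref{Cor2}, combined with the Kohn identity summed over $j$, yields compactness of $\dbar^{*}N_{p,q+1}$ on $A^{2}_{(p,q+1)}$. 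Range's formula
\[N_{p,q+1}=(\dbar^{*}N_{p,q+1})^{*}\dbar^{*}N_{p,q+1}+\dbar^{*}N_{p,q+2}(\dbar^{*}N_{p,q+2})^{*}\]
then collapses on $A^{2}_{(p,q+1)}$ to its first summand, because $(\dbar^{*}N_{p,q+2})^{*}=N_{p,q+2}\dbar$ vanishes on $\dbar$-closed forms, so $N_{p,q+1}|_{A^{2}_{(p,q+1)}}$ is compact.

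Conversely, assume $N_{p,q+1}$ is compact on $A^{2}_{(p,q+1)}$. The Hilbert space identity $\|\dbar^{*}N_{p,q+1}f\|^{2}+\|\dbar N_{p,q+1}f\|^{2}=\langle f,N_{p,q+1}f\rangle\leq\|f\|\,\|N_{p,q+1}f\|$, applied to differences $f^{k}-f^{\ell}$ of a bounded sequence after extracting a convergent subsequence of $\{N_{p,q+1}f^{k}\}$, yields a Cauchy (hence convergent) subsequence of $\{\dbar^{*}N_{p,q+1}f^{k}\}$; thus $\dbar^{*}N_{p,q+1}$ is compact on $A^{2}_{(p,q+1)}$. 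Precomposing with the bounded inclusion $f\mapsto f\wedge d\zb_{j}\colon A^{2}_{(p,q)}\to A^{2}_{(p,q+1)}$ and invoking the Kohn identity yields compactness of $[P_{p,q},\zb_{j}]$ on $A^{2}_{(p,q)}$ for each $j$.

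The hardest step extends this from the generators $\zb_{j}$ to all $\phi\in C(\Dc)$ via a Stone--Weierstrass argument on $\Gamma_{A}(p,q):=\{\phi\in C(\Dc):[P_{p,q},\phi]|_{A^{2}_{(p,q)}}\text{ is compact}\}$. The set is sup-norm closed (since $\|[P_{p,q},\phi]\|\leq 2\|\phi\|_{\infty}$), contains $1$ and each $z_{j}$ (because $z_{j}f\in A^{2}_{(p,q)}$ for $f\in A^{2}_{(p,q)}$, so $[P_{p,q},z_{j}]|_{A^{2}_{(p,q)}}=0$), and each $\zb_{j}$ by the previous step. The main obstacle is product closure: for $\phi,\psi\in\Gamma_{A}(p,q)$, Lemma \ref{LemAlgebra} writes $[P_{p,q},\phi\psi]f=[P_{p,q},\phi](\psi f)+\phi[P_{p,q},\psi]f$, and while the second summand is immediately compact, the first requires controlling $[P_{p,q},\phi]$ on $\psi f\in L^{2}_{(p,q)}\setminus A^{2}_{(p,q)}$. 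For monomial symbols one uses the identity $[P_{p,q},z^{\alpha}\zb^{\beta}]|_{A^{2}_{(p,q)}}f=[P_{p,q},\zb^{\beta}]|_{A^{2}_{(p,q)}}(z^{\alpha}f)$, valid because $z^{\alpha}f\in A^{2}_{(p,q)}$, to reduce to an induction on $|\beta|$; the inductive step splits intermediate expressions $\zb^{\beta-e_{j}}f$ into their $K^{2}_{(p,q)}$ and $K^{2,\perp}_{(p,q)}$ components. Once polynomials in $z,\zb$ lie in $\Gamma_{A}(p,q)$, Stone--Weierstrass forces $\Gamma_{A}(p,q)=C(\Dc)$.
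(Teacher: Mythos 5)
The paper does not actually prove Proposition \ref{Prop2}; it quotes it from Remark (ii) on p.~75 of Straube's book, so there is no in-paper argument to compare yours against. That said, the direction you handle first is correct and standard: specializing to $\phi=\zb_j$, using the orthogonal decomposition $g=\sum_j g_j\wedge d\zb_j$ with $g_j\in A^2_{(p,q)}(\D)$ from the proof of Corollary \ref{Cor2}, the identity $\dbar^*N_{p,q+1}(h\wedge d\zb_j)=(-1)^{p+q+1}[P_{p,q},\zb_j]h$, and Range's formula (whose second summand vanishes on $\dbar$-closed forms) correctly yields compactness of $N_{p,q+1}$ on $A^2_{(p,q+1)}(\D)$. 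The converse passage from compactness of $N_{p,q+1}$ on $A^2_{(p,q+1)}(\D)$ to compactness of $\dbar^*N_{p,q+1}$ there, and hence of each $[P_{p,q},\zb_j]$ on $A^2_{(p,q)}(\D)$, is also fine.

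The gap is in the step you yourself flag as the main obstacle: closing the class under products, i.e.\ the induction showing $[P_{p,q},\zb^\beta]$ is compact on $A^2_{(p,q)}(\D)$. In the inductive step you must control $[P_{p,q},\zb_j](\zb^{\beta'}f)$ with $\beta'=\beta-e_j$. Splitting $\zb^{\beta'}f$ into its $K^2_{(p,q)}$ and $K^{2,\perp}_{(p,q)}$ components gives $(I-P_{p,q})(\zb^{\beta'}f)=-[P_{p,q},\zb^{\beta'}]f$, which the inductive hypothesis does handle; but the other component is $P_{p,q}(\zb^{\beta'}f)=\zb^{\beta'}f+[P_{p,q},\zb^{\beta'}]f$, a $\dbar$-closed form that in general does \emph{not} lie in $A^2_{(p,q)}(\D)$ when $q\geq 1$: on the bidisc, $f=d\zb_1\in A^2_{(0,1)}$ gives $P_{0,1}(\zb_1 f)=\zb_1\,d\zb_1$, which is $\dbar$-closed with a non-holomorphic coefficient. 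So this term requires compactness of $[P_{p,q},\zb_j]$ on $K^2_{(p,q)}(\D)$, not merely on $A^2_{(p,q)}(\D)$ --- and whether the latter implies the former is precisely the open question recorded in the paper's remark following Lemma \ref{LemComCanonical}. Worse, since $P_{p,q}(\zb^{\beta'}f)$ differs from $\zb^{\beta'}f$ only by a term already known to be compact in $f$, controlling $[P_{p,q},\zb_j]$ on it is equivalent to the original problem: the splitting is circular. Your induction does close when $q=0$, because then $K^2_{(p,0)}(\D)=A^2_{(p,0)}(\D)$ (this is essentially Haslinger's argument for $p=q=0$), but for $1\leq q\leq n-1$ you need a different mechanism to pass from the symbols $\zb_j$ to general $\phi\in C(\Dc)$; this is exactly the point for which the paper defers to the cited remark in Straube's book.
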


We have the following percolation result as a corollary of Corollary \ref{Cor2} and 
Proposition \ref{Prop2}.  

\begin{corollary}
Let $\D$ be a bounded pseudoconvex domain in $\C^n,0\leq p\leq n,$ and $1\leq q\leq n-1$. 
Assume that $N_{p,q}$ is compact on $A^2_{(p,q)}(\D)$. Then $N_{p,q+1}$ is compact on 
$A^2_{(p,q+1)}(\D)$. 
\end{corollary}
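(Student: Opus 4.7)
The plan is to chain Proposition \ref{Prop2} (used twice, as a biconditional) with Corollary \ref{Cor2} (used once, to climb one form level) in order to move the compactness hypothesis on $N_{p,q}$ up to a compactness conclusion on $N_{p,q+1}$. The argument does not need to revisit any $\dbar$-theory directly; everything goes through the commutator reformulation.

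First I would apply Proposition \ref{Prop2} with form-level $q-1$ in place of $q$ (legitimate since $1\leq q\leq n-1$ forces $q-1\in[0,n-1]$). The hypothesis that $N_{p,q}$ is compact on $A^{2}_{(p,q)}(\D)$ is, by that proposition, equivalent to the statement that $[P_{p,q-1},\phi]$ is compact on $A^{2}_{(p,q-1)}(\D)$ for every $\phi\in C(\Dc)$. Next, for each such $\phi$, Corollary \ref{Cor2} (applied at form-level $q-1$, where again the index range is admissible) promotes compactness of $[P_{p,q-1},\phi]$ on $A^{2}_{(p,q-1)}(\D)$ to compactness of $[P_{p,q},\phi]$ on $A^{2}_{(p,q)}(\D)$. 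Since $\phi\in C(\Dc)$ was arbitrary, I obtain compactness of $[P_{p,q},\phi]$ on $A^{2}_{(p,q)}(\D)$ for every continuous $\phi$.

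Finally I would invoke Proposition \ref{Prop2} once more, this time at form-level $q$ itself, to convert the family of compact commutators into compactness of $N_{p,q+1}$ on $A^{2}_{(p,q+1)}(\D)$, which is exactly the desired conclusion.

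There is essentially no obstacle here: the corollary is a formal consequence of the two results cited in the paragraph immediately preceding it. The only care required is index bookkeeping, namely verifying that $q-1$ and $q$ both lie in the ranges demanded by Proposition \ref{Prop2} and Corollary \ref{Cor2}, which is automatic from the assumption $1\leq q\leq n-1$. The conceptual content of the corollary is that the percolation phenomenon for $N_{p,q}$ on $\dbar$-closed forms (Remark \ref{RemPercolate}) descends to the holomorphic subspaces $A^{2}_{(p,q)}(\D)$, and the machinery built in Corollary \ref{Cor2} is precisely what makes this descent work.
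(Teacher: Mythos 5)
Your proposal is correct and is exactly the argument the paper intends: the corollary is stated as an immediate consequence of Proposition \ref{Prop2} (applied at levels $q-1$ and $q$) and Corollary \ref{Cor2}, which is precisely your chain, and your index bookkeeping is right. No difference from the paper's approach.
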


%%%%%%%%%%%%%%%%%%%%%%%%%%%%%%%%%%%%%%%%%%%%%%%%%%
 %\singlespace
% \bibliographystyle{amsalpha}
%\bibliography{HankelNeumannForms}

\end{document}